
\documentclass[11pt]{amsart}
\usepackage{amsmath}
\setcounter{MaxMatrixCols}{30}
\usepackage{amsfonts}
\usepackage{amssymb}
\usepackage{graphicx}

\setcounter{MaxMatrixCols}{10}

\providecommand{\U}[1]{\protect\rule{.1in}{.1in}}
\providecommand{\U}[1]{\protect\rule{.1in}{.1in}}
\providecommand{\U}[1]{\protect\rule{.1in}{.1in}}
\providecommand{\U}[1]{\protect\rule{.1in}{.1in}}
\newtheorem{theorem}{Theorem}[section]
\theoremstyle{plain}

\newtheorem{corollary}{Corollary}[section]

\newtheorem{lemma}{Lemma}[section]

\newtheorem{remark}{Remark}[section]

\numberwithin{equation}{section}
\addtolength{\oddsidemargin}{-.875in}
\addtolength{\evensidemargin}{-.875in}
\addtolength{\textwidth}{1.75in}
\addtolength{\topmargin}{-.875in}
\addtolength{\textheight}{1.75in}

\input{tcilatex}

\begin{document}

\title[On $\kappa$-noncollapsed complete noncompact shrinking GRS which split at infinity]
{On $\kappa$-noncollapsed complete noncompact shrinking gradient Ricci solitons  which
split at infinity}

\author{Bennett Chow}
\address{Department of Mathematics \\ 
University of California \\ 
San Diego, La Jolla, CA 92093}
\email{benchow@math.ucsd.edu}
\author{Peng Lu}
\address{Department of Mathematics \\
 University of Oregon \\
  Eugene, OR 97403}
  \email{penglu@uoregon.edu}
\thanks{P.\thinspace L. is partially supported by a grant from the Simons Foundation.}

\date{\today}

\begin{abstract}
We discuss some geometric conditions under which a complete noncompact
shrinking gradient Ricci soliton will split at infinity.

\smallskip
\noindent \textbf{Keywords}. shrinking gradient Ricci soliton, Riemannian splitting.

\end{abstract}

\maketitle


\section{Introduction}

For complete noncompact shrinking gradient Ricci solitons (GRS), global
splitting theorems have been proven by Lichnerowicz (\cite{Lichnerowicz}),
Fang, Li and Zhang (\cite{FangEtc}) and Munteanu and Wang (\cite{MunteanuWang1},
\cite{MunteanuWang3}). In the more general context of
Bakry--Emery manifolds, these works prove splitting theorems under the
assumptions of the existence of geodesic lines and conditions on the
potential functions $f$. As a special case, one recovers the splitting
theorem of Cheeger and Gromoll (\cite{CheegerGromollRicSplit}) for complete
noncompact Riemannian manifolds with nonnegative Ricci curvature.

For complete noncompact shrinking GRS one may conjecture that their
curvatures must be bounded, their Ricci curvatures cannot be everywhere
positive, and either they split as the Riemannian product of $\mathbb{R}$
with a one-lower-dimensional shrinking GRS or they have quadratic curvature
decay. This conjecture is known to be true in dimensions at most $3$, where
quadratic curvature decay implies being a Gaussian shrinker, but the
conjecture remains open for dimensions at least $4$. Evidence toward this
conjecture are in the works of Chen (\cite{ChenStrongUniqueness}) and Cao and Zhou (\cite{CaoZhou}).

\medskip
In this paper, for $\kappa $-noncollapsed complete noncompact shrinking GRS
with bounded curvature, we discuss a splitting theorem at infinity (Theorem \ref{thm main de BC} below), a.k.a.,
\emph{dimension reduction} (see  Hamilton \cite[\S 22]{Hamilton1995}).

\medskip
In the rest of this section we collect some elementary facts regarding shrinking GRS
which will be used later, the reader may see \cite[\S 4.1]{ChowLuNi} for more details.
Let $(\mathcal{
M}^{n},g,f,\lambda )$ be a complete noncompact shrinking GRS with $f$
normalized, so that
\begin{equation}
\func{Rc}_{g}+\nabla _{g}^{2}f=\frac{\lambda }{2}g\quad \text{and}\quad
R_{g}+|\nabla _{g}f|_{g}^{2}=\lambda f,  \label{Split-Infinity-00}
\end{equation}
where $\lambda \in \mathbb{R}_{+}$.
Here and below, $\func{Rc}$, $\nabla ^{2}f$, and $R$ denote the Ricci tensor, Hessian of $f$,
and scalar curvature associated to a Riemannian metric, respectively.
In a later application, we shall rescale
the $\lambda =1$ case. If $(\mathcal{M}^{n},\bar{g},f)$ satisfies $\func{Rc}
_{\bar{g}}+\nabla _{\bar{g}}^{2}f=\frac{1}{2}\bar{g}$ and $R_{\bar{g}
}+|\nabla _{\bar{g}}f|_{\bar{g}}^{2}=f$, then $(\mathcal{M},\lambda ^{-1}
\bar{g},f)$ satisfies $\func{Rc}_{\lambda ^{-1}\bar{g}}+\nabla _{\lambda
^{-1}\bar{g}}^{2}f=\frac{\lambda }{2} \cdot \lambda ^{-1}\bar{g}$ and $R_{\lambda
^{-1}\bar{g}}+|\nabla _{\lambda ^{-1}\bar{g}}f|_{\lambda ^{-1}\bar{g}
}^{2}=\lambda f$.

For a shrinking GRS  $(\mathcal{M}^{n}, g, f, \lambda )$
we define diffeomorphisms $\varphi _{t}:\mathcal{M}\rightarrow \mathcal{M}$ by $
\frac{\partial }{\partial t}\varphi _{t}\left( x\right) =\frac{1}{1-\lambda t
}$ $\left( \nabla _{g}f\right) \left( \varphi _{t}\left( x\right) \right) $ and
$\varphi _{0}=\func{id}_{\mathcal{M}}$, $t\in (
-\infty ,\lambda ^{-1})$. Let $f(t)\doteqdot f\circ \varphi _{t}$ and $
g(t)\doteqdot (1-\lambda t)\varphi _{t}^{\ast }g$. Then $g(t)$ is a solution
to the Ricci flow, which satisfies
\begin{align}
& \frac{\partial f}{\partial t}(x,t)=\frac{1}{1-\lambda t}
|\nabla _{g}f|_{g}^{2}\left( \varphi _{t}\left( x\right) \right) =|\nabla
_{g(t)}f(t)|_{g(t)}^{2}(x),  \label{eq shrinking GRS RF 1}  \\
& \limfunc{Rc}{}_{g(t)}+\nabla _{g(t)}^{2}f(t)=\frac{\lambda }{2(1-\lambda t)}
g(t),  \label{eq shrinking GRS RF 2} \\
& R_{g(t)}+\left\vert \nabla _{g(t)}f(t)\right\vert
_{g(t)}^{2}=\dfrac{\lambda }{1-\lambda t}f(t).  \label{Split-Infinity-01}
\end{align}
By a result of Cao and Zhou (\cite{CaoZhou} and \cite
{HaslhoferMuller}), for $(\mathcal{M}, g, f, \lambda )$ we have the estimate
\begin{equation}f\left( x\right) \geq \frac{1}{4}
\left( (\lambda ^{1/2}d_{g}\left( x,O\right) -5n)_{+}\right) ^{2}. \label{eq f lower bdd}
\end{equation}

\section{Splitting at infinity for limits of shrinking GRS}

We say that a sequence $\{(\mathcal{M}_{i}^{n},g_{i},f_{i}, \lambda
_{i},x_{i})\}_{i=1}^\infty$ of pointed complete noncompact shrinking GRS is
\textbf{admissible} if it satisfies the following conditions.

\begin{enumerate}
\item $\func{Rc}_{g_i}+\nabla_{g_i}^{2}f_i =\frac{\lambda_i}{2}g$, where $
0<\lambda_i \leq \lambda$ for some $\lambda <\infty$.

\item The $f_i$ are normalized by $R_{g_i}+|\nabla_{g_i} f_i|_{g_i}^{2}=
\lambda_i f_i$.

\item $\lambda _{i}d_{g_{i}}(x_{i},O_{i}) \rightarrow \infty $, where $O_{i}$ is a minimum
point of $f_{i}$.

\item $\func{inj}_{g_{i}}(x_{i})\geq \iota $, for some  $\iota >0$.

\item For any given radius $\rho >0$, there exists a constant $C_{1}(\rho
)<\infty $ such that
\begin{equation}
|\func{Rm}_{g_{i}}|_{g_{i}}(x)\leq C_1(\rho)\quad \text{for }x\in B_{\rho
}^{g_{i}}(x_{i})\text{ and for all }i.  \label{localcurv}
\end{equation}

\item  For any given $\rho >0$, there is a constant $C_2(\rho)< \infty$ such that Ricci tensor satisfies
$|\nabla_{g_i} \func{Rc}_{g_i}|_{g_i} \leq C_2 (\rho)$ on the ball $B^{g_i}_{\rho}(x_i)$ for each $i$.

\end{enumerate}

We say that a Riemannian manifold \textbf{splits} if it isometric to the
product of a line and a Riemannian manifold. The reason we assume
that the $\lambda _{i}$ are bounded in the definition of admissibility above
is that otherwise we would allow for rescalings of asymptotically conical shrinking GRS
(although these are not counterexamples due to conditions 5 and 6), whose corresponding
limits do not split. Such a splitting result (Theorem \ref{thm compactness shrinker seq}
below) is the main result of this section.

We will need the following version of the compactness theorem
for a sequence of Riemannian manifolds.
We assume the reader is familiar with the notion of
$C^{k,\alpha}$ pointed Cheeger--Gromov convergence.

\begin{theorem}\label{thm compact nabla rc}
Let $\{ (\mathcal{M}^n_I,g_I,x_I)\}_{I=1}^\infty$ be a sequence of  pointed smooth complete Riemannian
manifolds of dimension $n$. Suppose that\emph{:}\smallskip

\noindent \emph{(a)} the injectivity radius
$\func{inj}_{g_I}(x_I) \geq \iota $ for all $I$, where $\iota$ is a positive constant\emph{;}\smallskip

\noindent \emph{(b) (bounded curvature at bounded distance)}
given any $\rho >0$, there is a constant $C_1(\rho)$ such that the Riemann curvature tensors
satisfy $|\func{Rm}_{g_I}|_{g_I} \leq C_1 (\rho)$ in the ball $B^{g_I}_{\rho}(x_I)$ for each $I$\emph{;}
and\smallskip

\noindent \emph{(c)}  given any $\rho >0$, there is a constant $C_2(\rho)$
such that the Ricci tensors satisfy
$|\nabla_{g_I} \func{Rc}_{g_I}|_{g_I} \leq C_2 (\rho)$
in the ball $B^{g_I}_{\rho}(x_I)$ for each $I$.\smallskip

Then, for any $\alpha \in (0,1)$, the sequence $\{ (\mathcal{M}^n_I,g_I,x_I ) \}$
subconverges in the $C^{2,\alpha}$ pointed Cheeger--Gromov sense to a pointed $C^{2,\alpha}$  complete
Riemannian manifold $(\mathcal{M}^n_\infty, g_\infty, x_\infty)$.
\end{theorem}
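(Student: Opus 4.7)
My plan follows the standard Cheeger--Gromov compactness recipe in the harmonic coordinate formulation of Anderson and Jost--Karcher, adapted to the fact that our derivative hypothesis in (c) is on $\operatorname{Rc}$ rather than on the full curvature tensor. First, I propagate the basepoint injectivity radius bound (a) to a uniform positive lower bound $\iota(\rho)$ on $\operatorname{inj}_{g_I}(y)$ for every $y\in B^{g_I}_{\rho-1}(x_I)$. The bounds (a) and (b) give a uniform volume lower bound on $B^{g_I}_{1}(x_I)$; Bishop--Gromov relative volume comparison, applicable since (b) provides a two-sided Ricci bound on $B^{g_I}_\rho(x_I)$, transfers this (together with the inclusion $B^{g_I}_1(x_I)\subset B^{g_I}_{\rho+1}(y)$) to a volume lower bound on $B^{g_I}_1(y)$ for every such $y$; and the Cheeger--Gromov--Taylor injectivity radius estimate, combined with the sectional curvature bound, converts this volume lower bound into the desired lower bound on $\operatorname{inj}_{g_I}(y)$.

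Second, with (b) and the injectivity radius bound from the previous step in hand, Jost--Karcher's theorem produces, at every $y\in B^{g_I}_{\rho-1}(x_I)$, a harmonic coordinate chart of definite radius $r_0=r_0(\rho)>0$ on which $C^{-1}\delta_{ij}\leq g_{ij}\leq C\,\delta_{ij}$ and $\|g_{ij}\|_{C^{1,\alpha}}\leq C$ for some $C=C(\rho,\alpha)$. In such coordinates the metric components satisfy the semilinear elliptic system
\begin{equation*}
g^{kl}\,\partial_k\partial_l\,g_{ij}=-2\operatorname{Rc}_{ij}+Q_{ij}(g^{-1},\partial g),
\end{equation*}
where $Q_{ij}$ is a rational function of the components of $g$, quadratic in $\partial g$. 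Hypothesis (c), together with the $C^{1,\alpha}$ bound on $g_{ij}$, forces $\operatorname{Rc}_{ij}$ to be Lipschitz in these coordinates, hence $C^{0,\beta}$ for every $\beta\in(0,1)$; the nonlinear term $Q_{ij}$ is also $C^{0,\beta}$ by the $C^{1,\alpha}$ bound. Interior Schauder estimates for the linear elliptic operator $g^{kl}\partial_k\partial_l$ then yield $\|g_{ij}\|_{C^{2,\beta}}\leq C'(\rho,\beta)$ on a slightly smaller coordinate ball, uniformly in $I$.

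Third, I assemble the subsequence by a standard diagonal argument. Fix an exhausting sequence $\rho_k\uparrow\infty$. Cover $B^{g_I}_{\rho_k}(x_I)$ by a net of harmonic charts of radius $\tfrac{1}{2}r_0(\rho_k)$ whose cardinality is bounded uniformly in $I$ by Bishop--Gromov. Arzel\`a--Ascoli applied to the $C^{2,\beta}$-bounded metric components (with $\beta>\alpha$) and to the transition maps between charts gives $C^{2,\alpha}$ subsequential convergence on each $B^{g_I}_{\rho_k}(x_I)$; diagonalizing in $k$ upgrades this to $C^{2,\alpha}$ convergence on every bounded ball. The limit atlas defines the $C^{2,\alpha}$ pointed complete Riemannian manifold $(\mathcal{M}^n_\infty, g_\infty, x_\infty)$, and the Cheeger--Gromov diffeomorphisms from exhausting domains of $\mathcal{M}_\infty$ into $\mathcal{M}_I$ are built by gluing the local harmonic chart maps via a partition of unity, or by a center-of-mass construction, as in Hamilton's proof of the analogous compactness theorem for Ricci flow.

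The main obstacle is precisely that hypothesis (c) is only a bound on $\nabla\operatorname{Rc}$, not on the full $\nabla\operatorname{Rm}$: one must verify that this suffices for $C^{2,\alpha}$, rather than merely $C^{1,\alpha}$, convergence of the metrics. The resolution is exactly the point of working in harmonic coordinates: only $\operatorname{Rc}$ (not the Riemann tensor) appears on the right-hand side of the metric PDE, so a Hölder bound on $\operatorname{Rc}$ alone is enough to Schauder-upgrade $g_{ij}$ from $C^{1,\alpha}$ to $C^{2,\alpha}$. Everything else is routine Cheeger--Gromov--Hamilton--Anderson machinery.
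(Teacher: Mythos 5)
Your proposal is correct and follows essentially the same route as the paper: propagate the injectivity radius bound via Cheeger--Gromov--Taylor, obtain uniform $C^{1,\alpha}$ harmonic coordinates from Jost--Karcher, observe that the harmonic-coordinate elliptic system has only $\operatorname{Rc}$ (not $\operatorname{Rm}$) as source so that hypothesis (c) plus Schauder bootstraps to $C^{2,\alpha}$, and then assemble the limit by the standard Hamilton-style diagonal/gluing machinery. The only minor wrinkle is that the nonlinear term $Q(g^{-1},\partial g)$ is controlled in $C^{\alpha}$ (not in $C^{\beta}$ for arbitrary $\beta$) from the $C^{1,\alpha}$ metric bound, so one should either fix $\beta\leq\alpha$ in the Schauder step or first apply Jost--Karcher at a higher Hölder exponent; this does not affect the validity of the argument.
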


\begin{proof}
By assumption (b) and a theorem of  Cheeger, Gromov, and Taylor (\cite{CGT82}), we have the following.
Given any $\rho >0$ there is a constant
$\iota_0 = \iota_0(\rho,\iota,n)  >0$ such that the injectivity radius $\func{inj}_{g_I}(x)\geq \iota_0$
 for any $x \in B_{\rho}^{g_I}(x_I)$ and for any $I$. Fix an $\alpha \in (0,1)$.
Then we can use (b) and a theorem of Jost and Karcher (\cite{JoKa82})
to further conclude the following.
There is a constant $r_0 = r_0(\rho,\iota,n) \in (0,\iota_0(\rho,\iota,n))$ such that,
for each $x \in B_{\rho}^{g_I}(x_I)$, in harmonic coordinates on
 $B_{r_0}^{g_I}(x)$ the metric tensor coefficients
  $(g_{I})_{ij}$ satisfy
 the following  estimates:

 \medskip
 \noindent (b1) The $(g_I)_{ij}$ on  $B_{3r_0/4}^{g_I}(x)$ have
 uniformly (independent of $I$) bounded $C^{1,\alpha}$ norms in the local harmonic coordinates.\smallskip

 \noindent (b2) For each $y \in B_{3r_0/4}^{g_I}(x)$ we have $\frac{1}{10} (\delta_{ij})
 \leq ((g_I)_{ij} (y) ) \leq 10 (\delta_{ij})$.

 \medskip
 For any  $x \in B_{\rho}^{g_I}(x_I)$ it follows from (b1),  (b2),  and assumption (c) that:

\medskip
\noindent (c1) The Ricci tensor coefficients $(\func{Rc}_{g_I})_{ij}$, in the harmonic coordinates on
$B_{3r_0/4}^{g_I}(x)$, have
uniformly (independent of $I$)  bounded $C^\alpha$ norm.

\medskip
 \noindent  Since Ricci tensor coefficients in harmonic  coordinates are given by
 \[
 (\func{Rc}_{g_I})_{ij} =-\frac{1}{2} (g_I)^{kl} \frac{\partial^2 (g_I)_{ij}}{\partial x^k
  \partial x^l} + \cdots ,
 \]
where the dots indicate lower order terms involving at most one derivative of the
 metric  (\cite[Lemma 4.1 ]{DeKa81}), by the Schauder estimates for elliptic \textsc{pde},
 (b1), (b2), and (c1), we have that:

 \medskip
  \noindent (c2) The metric tensor  coefficients $(g_I)_{ij}$, in harmonic coordinates, have uniform
 (independent of $I$) $C^{2,\alpha}$  estimates  on  $B_{r_0/2}^{g_I}(x)$ for any
 $x \in B_{\rho}^{g_I}(x_I)$.

\medskip
Note that Greene and Wu, and separately Peters (see Greene's survey \cite{Greene93}), proved
the following theorem.  If a sequence of Riemannian manifolds $\{ (\mathcal{M}^n_I,g_I )
\}_{I=1}^\infty$  satisfies the following conditions:

\medskip
\noindent (a1)  the injectivity radius $\func{inj}_{g_I} \geq \iota >0 $ for all $I$,\smallskip

\noindent  (b3) for some constant $C_1$ the Riemann curvature tensor $| \func{Rm}_{g_I} |_{g_I}
\leq C_1$ on $\mathcal{M}_I$  for each $I$, and\smallskip

\noindent (d) (uniformly bounded diameter) for some constant $C_3$ the diameter
 $\func{diam}(\mathcal{M}_I,$ $g_I ) \leq C_3$
for each $I$,

\medskip
\noindent then  the sequence  $\{ (\mathcal{M}^n_I,g_I )
\}$ subconverges in the $C^{1,\alpha}$ Cheeger--Gromov
sense to a $C^{1,\alpha}$ Riemannian
manifold $(\mathcal{M}^n_\infty,g_\infty )$ of dimension $n$.\smallskip

If the Riemannian manifolds in Theorem \ref{thm compact nabla rc} have uniformly
bounded diameter, then we can use (c2) and the above compactness result to conclude
Theorem \ref{thm compact nabla rc}.

If the Riemannian manifolds in Theorem \ref{thm compact nabla rc} do not have uniformly
bounded diameter, then  we can adjust slightly Hamilton's
proof of a compactness result for pointed Riemannian
manifolds under the assumption that all derivatives of curvature tensor are bounded
 (\cite{Hamilton1995B}) to address the noncompact limit. Regarding this,
 a key modification is to replace the normal
 coordinates used in constructing the limit manifolds by
 the harmonic coordinates constructed above.
 We omit the details here.
 ~~~ $\square$
\end{proof}

Now we can prove a splitting theorem at infinity for the limit of a sequence of shrinking GRS.

\begin{theorem}\label{thm compactness shrinker seq}
Let $\{(\mathcal{M}_{i}^{n},g_{i},f_{i},\lambda _{i},x_{i})\}$ be an admissible sequence of pointed complete
noncompact shrinking GRS.
Let $(\mathcal{M}^n_\infty,g_\infty, x_\infty)$ be the pointed $C^{2,\alpha}$  Cheeger--Gromov limit of some
subsequence of $\{(\mathcal{M}_{i}^{n},g_{i},x_{i})\}$ given by Theorem \ref{thm compact nabla rc}.
Then $(\mathcal{M}_{\infty}^n,
g_{\infty })$ is isometric to the product of $\mathbb{R}$ with a complete $C^{2,\alpha}$
 Riemannian manifold $(\mathcal{N}^{n-1},h)$.
\end{theorem}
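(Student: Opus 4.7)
The plan is to construct on the limit $(\mathcal{M}_\infty,g_\infty)$ a nonconstant $C^{2}$ function $h_\infty$ with parallel, unit gradient, and then invoke the classical fact that such a function on a complete Riemannian manifold forces an isometric splitting $\mathbb{R}\times\mathcal{N}^{n-1}$ along the flow of $\nabla h_\infty$.

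The candidate is obtained by renormalizing the potentials. First, condition~5 bounds $R_{g_i}(x_i)$ uniformly, so the scalar identity from condition~2 reduces the claim $|\nabla_{g_i}f_i|_{g_i}(x_i)\to\infty$ to $\lambda_i f_i(x_i)\to\infty$. Because $0<\lambda_i\le \lambda$, condition~3 gives $\lambda_i^{1/2}d_{g_i}(x_i,O_i)\ge \lambda^{-1/2}\,\lambda_i d_{g_i}(x_i,O_i)\to\infty$; inserting this into the Cao--Zhou lower bound \eqref{eq f lower bdd} yields
\[
\lambda_i f_i(x_i) \ge \tfrac{1}{4}\bigl(\lambda_i d_{g_i}(x_i,O_i)\bigr)^2\bigl(1+o(1)\bigr)\longrightarrow \infty.
\]
Setting $h_i := (f_i-f_i(x_i))/|\nabla_{g_i}f_i|_{g_i}(x_i)$, we have $h_i(x_i)=0$, $|\nabla_{g_i}h_i|_{g_i}(x_i)=1$, and the tensor equation in \eqref{Split-Infinity-00} gives
\[
\nabla_{g_i}^2 h_i = \frac{1}{|\nabla_{g_i}f_i|_{g_i}(x_i)}\Bigl(\tfrac{\lambda_i}{2}g_i - \func{Rc}_{g_i}\Bigr),
\]
which tends to $0$ uniformly on each $B_\rho^{g_i}(x_i)$ by conditions~1 and~5. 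Integrating this vanishing Hessian bound along minimizing geodesics emanating from $x_i$ yields uniform $C^1$ bounds on $h_i$ over each $B_\rho^{g_i}(x_i)$.

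By Theorem~\ref{thm compact nabla rc}, pass to a subsequence with $(\mathcal{M}_i,g_i,x_i)\to (\mathcal{M}_\infty,g_\infty,x_\infty)$ in the $C^{2,\alpha}$ Cheeger--Gromov sense via diffeomorphisms $\Phi_i$. Arzel\`a--Ascoli, using the uniform $C^2$ control on $h_i\circ\Phi_i$, extracts $h_i\circ\Phi_i\to h_\infty$ in $C^{1,\beta}_{\mathrm{loc}}$ for every $\beta<1$ along a further subsequence. Since $g_i\to g_\infty$ in $C^{2,\alpha}$, the Christoffel symbols converge in $C^{1,\alpha}$, so writing $\nabla_{g_i}^2 h_i=\partial^2 h_i-\Gamma_i\,\partial h_i$ in local harmonic coordinates, the pointwise limit satisfies $\nabla_{g_\infty}^2 h_\infty\equiv 0$, $h_\infty\in C^{2,\alpha}_{\mathrm{loc}}$, and $|\nabla_{g_\infty}h_\infty|_{g_\infty}(x_\infty)=1$. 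Because $\nabla h_\infty$ is parallel, $|\nabla h_\infty|_{g_\infty}\equiv 1$; by completeness its flow $\phi_t$ is defined for all $t\in\mathbb{R}$ and consists of isometries, so $(t,p)\mapsto \phi_t(p)$ isometrically identifies $(\mathcal{M}_\infty,g_\infty)$ with $(\mathbb{R},dt^2)\times(\mathcal{N}^{n-1},h)$, where $\mathcal{N}^{n-1}:=h_\infty^{-1}(0)$ and $h:=g_\infty|_{\mathcal{N}}$.

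The main obstacle is the renormalization step: one must check that $|\nabla_{g_i}f_i|_{g_i}(x_i)$ grows at precisely the rate that makes the rescaled gradient stabilize at unit length while the rescaled Hessian collapses. The specific form of condition~3, together with the Cao--Zhou quadratic growth estimate \eqref{eq f lower bdd} and the bounded-curvature hypothesis, is what makes this dichotomy work---without the curvature bound the Hessian could not be forced to zero, and without \eqref{eq f lower bdd} the gradient could not be forced to diverge.
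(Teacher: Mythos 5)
Your proof is correct and pursues the same overall strategy as the paper: build a real-valued function on the Cheeger--Gromov limit with parallel unit gradient by renormalizing the potentials, then split along its gradient flow. The two proofs differ in the choice of normalization and in how the $C^{2}$ regularity of the limit function is obtained. The paper works with $F^{(i)} = 2\lambda_i^{-1/2}\bigl(\sqrt{f_i}-\sqrt{f_i}(x_i)\bigr)$, which has the built-in advantage that $|dF^{(i)}|_{g_i}\le 1$ \emph{globally} via the identity $R+|\nabla f|^2=\lambda f$, and it then controls $\nabla^2 F^{(i)}$ and $\nabla^3 F^{(i)}$ separately (the latter via condition~6) to get $C^{2,\alpha}$ convergence of the transplanted functions. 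You instead take the affine normalization $h_i=(f_i-f_i(x_i))/|\nabla f_i|(x_i)$, for which $\nabla^2 h_i$ tends to zero directly from the soliton equation and the growth $|\nabla f_i|(x_i)\to\infty$, and recover uniform $C^1$ control by integrating the Hessian from $x_i$. Your upgrade of $C^{1,\beta}$ convergence to $C^2$ is the nice observation: since $\nabla^2_{g_i}h_i\to 0$ uniformly and the Christoffel symbols converge, $\partial^2 h_i$ converges uniformly, so the $C^1$ limit is automatically $C^2$ without needing third-order bounds on the function itself (you still invoke condition~6 implicitly through Theorem~\ref{thm compact nabla rc} to get the $C^{2,\alpha}$ metric limit). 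Both proofs are sound; yours is marginally cleaner in that the third-derivative estimate on the potential is replaced by a soft argument, while the paper's square-root normalization makes the a~priori gradient bound more transparent.
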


\begin{remark}
Note that $h$ is possibly flat.  In particular, this is true for noncompact shrinking GRS $\{(\mathcal{M}^{n},
g,f, 1,x_{i})\}$  with $x_i \rightarrow \infty$
 under condition that $|\func{Rc}|\rightarrow 0$, in which case the shrinking GRS
is asymptotically conical.
\end{remark}

\begin{proof}
 Define the function
\begin{equation*}
F^{(i)}\left( x\right) \doteqdot  2 \lambda _{i}^{-1/2} (\sqrt{f_{i}}(x)  -\sqrt{
f_{i}}(x_{i}))\quad \text{for }x\in \mathcal{M}_{i}.
\end{equation*}
Using the diffeomorphisms in the definition of Cheeger--Gromov convergence,
we can transplant $F^{(i)}$ to a sequence of functions which are defined on balls
$B_{\rho}^{g_\infty}(x_\infty) \subset \mathcal{M}_\infty$ for arbitrary $\rho>0$,
as long as we choose $i$ large enough.
The basic idea of the proof is that the limit of a subsequence of these transplanted  functions
provides the splitting of $(\mathcal{M}_\infty,g_\infty)$.
A version of this idea was introduced in  \cite[p. 383]{ChowLuNi}
by the first named author of this article.

By (\ref{Split-Infinity-00}), the gradient of $F^{(i)}$ satisfies
\begin{equation*}
|dF^{(i)}|_{g_{i}}=\lambda _{i}^{-1/2}  \cdot \frac{|df_{i}|_{g_{i}}}{f_{i}^{1/2}}
=\left( 1-\frac{R_{g_{i}}}{\lambda _{i}f_{i}}\right) ^{1/2}\leq 1\quad \text{
on }\mathcal{M}_{i},
\end{equation*}
since $R_{g_{i}}\geq 0$ by Chen \cite{ChenStrongUniqueness}.
Fix any $\rho >0$ and let $y \in B^{g_i}_\rho (x_i)$. Using (\ref{eq f lower bdd}), we have
\begin{align}
f_i(y) &\geq \frac{1}{4} \left ((\lambda_i^{1/2} d_{g_i}(y,O_i) -5n)_+ \right )^2  \label{eq f i lower bdd ball} \\
& \geq \frac{1}{4} \left (\lambda_i^{1/2} (d_{g_i}(x_i,O_i) -\rho) -5n \right )^2 \notag \\
& \geq \frac{1}{4} \left (\lambda_i^{1/2} d_{g_i}(x_i,O_i) - \lambda^{1/2} \rho -5n \right )^2, \notag
\end{align}
for $i$ large enough such that $\lambda_i^{1/2} d_{g_i}(x_i,O_i) - \lambda^{1/2} \rho -5n >0$.
Here we have used $\lambda_i \leq \lambda$.
Hence, we have
\[
\frac{R_{g_{i}}(y)}{\lambda _{i}f_{i}(y)} \leq \frac{C_1(\rho)}{\frac{1}{4} \left (\lambda_i
d_{g_i}(x_i,O_i) - \lambda \rho -5n \lambda^{1/2} \right )^2}.
\]
 By the admissible assumption that $\lambda_i
d_{g_i}(x_i,O_i) \rightarrow \infty$, we get that for any fixed $\rho >0$
\[
\lim_{i \rightarrow \infty} \sup_{y \in B^{g_i}_\rho (x_i)}
\frac{R_{g_{i}}(y)}{\lambda _{i}f_{i}(y)}  =0.
\]
Hence we get
\begin{equation}
\lim_{i \rightarrow \infty} \sup_{y \in B^{g_i}_\rho (x_i)}  |dF^{(i)}|_{g_{i}}(y) =1.
\label{eq grad F i est}
\end{equation}

\medskip
Next we consider the Hessian of $F^{(i)}$. In local coordinates and using
(\ref{Split-Infinity-00}), we compute that
\begin{align*}
\left\vert \nabla _{g_{i}}^{2}F^{(i)}\right\vert _{g_{i}}& =\lambda
_{i}^{-1/2}\left\vert \frac{\nabla _{g_{i}}^{2}f_{i}}{f_{i}^{1/2}}-\frac{1}{2
}\frac{df_{i}\otimes df_{i}}{f_{i}^{3/2}}\right\vert _{g_{i}} \\
& =\lambda _{i}^{-1/2}\left( \frac{\left\vert -\func{Rc}_{g_{i}}+\frac{
\lambda _{i}}{2}g_{i}\right\vert _{g_{i}}}{f_{i}^{1/2}}+\frac{1}{2}\frac{
|df_{i}|_{g_{i}}^{2}}{f_{i}^{3/2}}\right) \\
& \leq \frac{ \left\vert \func{Rc}_{g_{i}}\right\vert
_{g_{i}} +\frac{\sqrt{n}+1}{2}\lambda _{i}}{(\lambda_{i} f_{i})^{1/2}}.
\end{align*}
Fix any $\rho >0$ and let $y \in B^{g_i}_\rho (x_i)$. Using (\ref{eq f i lower bdd ball}), we have
\begin{equation}
\left\vert \nabla _{g_{i}}^{2}F^{(i)}\right\vert _{g_{i}}(y)
\leq \frac{ C_1(\rho)+ \frac{\sqrt{n}+1}{2} \lambda} { \frac{1}{2} \left (\lambda_i
d_{g_i}(x_i,O_i) - \lambda \rho -5n \lambda^{1/2} \right )}. \label{eq Hess F i est bdd}
\end{equation}
Hence, we get
\begin{equation}
\lim_{i \rightarrow \infty} \sup_{y \in B^{g_i}_\rho (x_i)}  |\nabla _{g_{i}}^{2} F^{(i)}|_{g_{i}}(y) =0.
\label{eq Hess F i est}
\end{equation}

\medskip
Thirdly, we consider the third covariant derivatives of $F^{(i)}$. Again, using local coordinates, we
compute that
\begin{align*}
& \left\vert \nabla _{g_{i}}^{3}F^{(i)}\right\vert _{g_{i}} \\
& =\lambda _{i}^{-1/2}\left\vert \frac{\nabla _{\ell jk}^{3}f_{i}}{
f_{i}^{1/2}}-\frac{\nabla _{jk}^{2}f_{i}\nabla _{\ell }f_{i}+\nabla _{\ell
k}^{2}f_{i}\nabla _{j}f_{i}+\nabla _{\ell j}^{2}f_{i}\nabla _{k}f_{i}}{
2f_{i}^{3/2}}+\frac{3}{4}\frac{\nabla _{\ell }f_{i}\nabla _{j}f_{i}\nabla
_{k}f_{i}}{f_{i}^{5/2}}\right\vert _{g_{i}} \\
& \leq \lambda _{i}^{-1/2}\left( \frac{\left\vert \nabla _{g_{i}}\func{Rc}
_{g_{i}}\right\vert _{g_{i}}}{f_{i}^{1/2}}+\frac{3 \left (|\func{Rc}
_{g_{i}}|_{g_{i}}+\frac{\lambda _{i}\sqrt{n}}{2} \right )\left\vert \nabla
_{g_{i}}f_{i}\right\vert _{g_{i}}}{2f_{i}^{3/2}}+\frac{3}{4}\frac{\left\vert
\nabla _{g_{i}}f_{i}\right\vert _{g_{i}}^{3}}{f_{i}^{5/2}}\right) \\
& \leq \frac{\left\vert \nabla _{g_{i}}\func{Rc}
_{g_{i}}\right\vert _{g_{i}}}{(\lambda_i f_{i})^{1/2}}+\frac{3 \lambda_i^{1/2} \left( 2|\func{Rc}
_{g_{i}}|_{g_{i}}+\lambda _{i}(\sqrt{n}+1)\right) }{4f_{i}},
\end{align*}
where we used $\func{Rc}_{g_{i}}+\nabla _{g_{i}}^{2}f_{i}=\frac{\lambda _{i}
}{2}g_{i}$ and $|\nabla _{g_{i}}f_{i}|_{g_{i}}^{2}\leq \lambda _{i}f_{i}$.
Fix any $\rho >0$ and let $y \in B^{g_i}_\rho (x_i)$. Using (\ref{eq f i lower bdd ball})
and condition 6 in the admissible assumption, we have
\begin{align}
  \left\vert \nabla _{g_{i}}^{3}F^{(i)}\right\vert _{g_{i}}(y) \leq & \frac{C_2(\rho)}{ \frac{1}{2}
 \left (\lambda_id_{g_i}(x_i,O_i) - \lambda \rho -5n \lambda^{1/2} \right )} \label{eq third deriv F i est bdd} \\
 & + \frac{6C_1(\rho) \lambda^{3/2} + 3( \sqrt{n}+1) \lambda^{5/2}}{\left (\lambda_id_{g_i}(x_i,O_i) -
  \lambda \rho -5n \lambda^{1/2} \right )^2}.  \notag
\end{align}
Hence, we get
\begin{equation}
\lim_{i \rightarrow \infty} \sup_{y \in B^{g_i}_\rho (x_i)}  |\nabla _{g_{i}}^{3} F^{(i)}|_{g_{i}}(y) =0.
\label{eq third deriv F i est}
\end{equation}

\medskip
By the admissible assumption and Theorem \ref{thm compact nabla rc}, we know that
$\{(\mathcal{M}_i,g_i,x_i) \}$ subconverges to $(\mathcal{M}_\infty,g_\infty,x_\infty)$
in the $C^{2,\alpha}$ Cheeger--Gromov sense for any $\alpha  \in (0,1)$.
There exists an exhaustion $\{U_i \}$ of $\mathcal{M}_\infty$ by relatively compact open sets
and embeddings $\psi_i : U_i \rightarrow \mathcal{M}_i$ such that $\overline{U}_i \subset U_{i+1}$,
$\psi_i (x_\infty) = x_i$,
 and $ \psi_i^* g_i \rightarrow g_\infty$ in the $C^{2,\alpha}$ topology on any compact subset of
 $\mathcal{M}_\infty$.

Let $\tilde{g}_i \doteqdot \psi_i^* g_i$ be metrics  on
$\mathcal{M}_\infty$ and let $\tilde{F}^{(i)} \doteqdot F^{(i)} \circ \psi_i$ be the transplanted functions  on
$\mathcal{M}_\infty$.
 Fix any $\rho >0$. By the convergence of $\tilde{g}_i$  to $g_\infty$
and by equations (\ref{eq grad F i est}), (\ref{eq Hess F i est bdd}), and (\ref{eq third deriv F i est bdd}),
there is a constant $C_3>1$ such that the following bounds hold
for any $y \in B^{g_\infty}_\rho (x_\infty)$:
\begin{align}
 \tilde{F}_i(x_\infty)= & \, 0, \label{eq F tilde bdd} \\
 |d \tilde{F}^{(i)}|_{g_{\infty}} (y) \leq & \, C_3, \label{eq grad F tilde bdd} \\
  \left\vert \nabla _{\tilde{g}_{i}}^{2} \tilde{F}^{(i)}\right\vert _{g_{\infty}} (y) \leq &
\, \frac{ C_3(C_1(\rho)+ \frac{\sqrt{n}+1}{2} \lambda ) } { \frac{1}{2} \left (\lambda_i
d_{g_i}(x_i,O_i) - \lambda \rho -5n \lambda^{1/2} \right )},  \label{eq Hess F tilde bdd} \\
 \left\vert \nabla _{\tilde{g}_{i}}^{3} \tilde{F}^{(i)}\right\vert _{g_{\infty}}(y)
\leq \, & \frac{C_3C_2(\rho)}{ \frac{1}{2}
 \left (\lambda_id_{g_i}(x_i,O_i) - \lambda \rho -5n \lambda^{1/2} \right )}  \label{eq grad power 3 F tilde bdd} \\
& + \frac{C_3(6C_1(\rho) \lambda^{3/2} +3( \sqrt{n}+1) \lambda^{5/2})}{\left (\lambda_id_{g_i}(x_i,O_i) -
  \lambda \rho -5n \lambda^{1/2} \right )^2}.  \notag
\end{align}
From (\ref{eq F tilde bdd}) and (\ref{eq grad F tilde bdd}), we know that $\tilde{F}^{(i)}$ and
$d \tilde{F}^{(i)}$
are uniformly bounded on $( B^{g_\infty}_\rho (x_\infty), g_\infty)$.

In local coordinates, we  compute that the components of Hessian are
\begin{equation}
 (\nabla _{\tilde{g}_{i}}^{2} \tilde{F}^{(i)})_{kl} = \frac{\partial^2 \tilde{F}^{(i)} }{\partial x^k \partial x^l}
-\Gamma_{kl}^p(\tilde{g}_{i})  \frac{\partial \tilde{F}^{(i)} }{\partial x^p}. \label{eq F i tilde x  x der}
\end{equation}
By the $C^{2,\alpha}$ convergence of $\tilde{g}_i$ to $g_\infty$, we know that the $\Gamma_{kl}^p(\tilde{g}_{i})$
are uniformly bounded on $( B^{g_\infty}_\rho (x_\infty), g_\infty)$, and hence the
$ \frac{\partial^2 \tilde{F}^{(i)} }{\partial x^k \partial x^l}$ are uniformly bounded on $( B^{g_\infty}_\rho
(x_\infty), g_\infty)$, so the $\nabla _{\tilde{g}_{\infty}}^{2} \tilde{F}^{(i)}$ are uniformly bounded on
$( B^{g_\infty}_\rho (x_\infty), g_\infty)$.

Equation (\ref{eq grad power 3 F tilde bdd}) implies that components $(\nabla _{\tilde{g}_{i}}^{2}
\tilde{F}^{(i)})_{kl}$
have uniformly bounded $C^\alpha$ norm with respect to the metric $g_\infty$.
By (\ref{eq F i tilde x  x der}) and that the $\Gamma_{kl}^p(\tilde{g}_{i})$ have uniformly bounded $C^\alpha$ norm,
we conclude that the $ \frac{\partial^2 \tilde{F}^{(i)} }{\partial x^k \partial x^l}$ are uniformly bounded
in $C^\alpha$ norm on $( B^{g_\infty}_\rho (x_\infty), g_\infty)$, so the $\nabla _{\tilde{g}_{\infty}}^{2}
\tilde{F}^{(i)}$  are uniformly bounded in the $C^\alpha$ norm on $( B^{g_\infty}_\rho (x_\infty), g_\infty)$.

\vskip .2cm
Hence we can conclude that $\tilde{F}^{(i)}$  subconverges to a function $F_\infty$ in the $C^{2,\alpha}$ norm
on $(\mathcal{M}_\infty, g_\infty)$.
It follows from (\ref{eq grad F i est}), (\ref{eq Hess F i est}), and (\ref{eq third deriv F i est}) that
the limit function $F_{\infty }$ on $\mathcal{M}
_{\infty }$ satisfies $F_{\infty }(x_{\infty })=0$, $\left\vert \nabla F_{\infty
}\right\vert _{g_{\infty }}\equiv 1$, and $\left\vert \nabla ^{2}F_{\infty
}\right\vert _{g_{\infty }}\equiv 0$.  This yields a Riemannian splitting for the
limit. Note that if the metric $g_\infty$ is smooth, then
$F_{\infty }$ is also smooth.
~~~ $\square$
\end{proof}

The following is a parabolic version of Theorem \ref{thm compactness shrinker seq} under a stronger assumption.

\begin{corollary} \label{cor type I sol split}
Let $\{(\mathcal{M}_{i}^{n},g_{i},f_{i},\lambda _{i},x_{i})\}$ be a sequence of pointed complete
noncompact shrinking GRS with $f_i$ normalized so that
$\func{Rc}_{g_i}+\nabla_{g_i}^{2}f_i =\frac{\lambda_i}{2}g$, where $
0<\lambda_i \leq \lambda$ for some $\lambda <\infty$ and where
  $R_{g_i}+|\nabla_{g_i} f_i|_{g_i}^{2}=
\lambda_i f_i$.
We assume that the $(\mathcal{M}_{i}^{n},g_{i}), \, i =1, 2, \ldots$, have
uniformly bounded curvatures
$| \func{Rm}_{g_i}|_{g_i} \leq C < \infty$ and are uniformly $\kappa$-noncollapsed \emph{(}below a fixed scale\emph{)}
for some $\kappa >0$.
Let $(\mathcal{M}^n_i,g_i(t))$ be the Ricci flow solution in canonical form associated to the shrinking
GRS $(\mathcal{M}_{i}^{n},g_{i},f_{i},\lambda _{i})$
as defined preceding \emph{(\ref{eq shrinking GRS RF 1})}.

Then the sequence $\{ (\mathcal{M}_i^n, g_i(t), x_i) \}$ subconverges in the $C^\infty$ pointed Cheeger--Gromov
sense to an ancient solution of the Ricci flow   $ (\mathcal{M}_\infty^n, g_\infty(t), x_\infty),
\, t \in (-\infty, \lambda^{-1})$. Moreover, $ (\mathcal{M}_\infty^n, g_\infty(t))$
is isometric to the product of $\mathbb{R}$ with a complete
 $\kappa$-noncollapsed \emph{(}below some fixed scale\emph{)}
 Type I \emph{(}at $t= -\infty$\emph{)} ancient solution $(\mathcal{N}^{n-1},h(t))$.
Note that $h(t)$ is possibly flat.
\end{corollary}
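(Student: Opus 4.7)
The plan is to extract a $C^\infty$ ancient Ricci flow limit of the canonical form flows $g_i(t)$ via Hamilton's compactness theorem for Ricci flows, apply Theorem~\ref{thm compactness shrinker seq} at the time slice $t=0$ to obtain a Riemannian splitting there, and then propagate the splitting to all times using Ricci flow uniqueness. The first step is to verify that $\{(\mathcal{M}_i,g_i,f_i,\lambda_i,x_i)\}$ is admissible in the sense of Theorem~\ref{thm compactness shrinker seq}; here I take the dimension-reduction condition $\lambda_id_{g_i}(x_i,O_i)\to\infty$ to be implicit in the statement. Conditions 1 and 2 are the soliton equations, and condition 5 holds with $C_1(\rho)=C$ by the uniform curvature bound. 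The $\kappa$-noncollapsing combined with this bound supplies, via Cheeger--Gromov--Taylor, a uniform positive injectivity radius lower bound, giving condition 4. For condition 6, the shrinking soliton identities $\nabla_iR=2R_{ij}\nabla^jf$ and $\nabla_kR_{ij}-\nabla_iR_{kj}=R_{ikjl}\nabla^lf$ express covariant derivatives of curvature algebraically in terms of curvature and $\nabla f$; on a ball $B^{g_i}_\rho(x_i)$ the bound $|\nabla_{g_i}f_i|_{g_i}^2\leq\lambda_if_i$ together with~(\ref{eq f lower bdd}) yields a uniform bound for $|\nabla_{g_i}\func{Rc}_{g_i}|_{g_i}$. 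Iterating these identities, or alternatively applying Shi's local derivative estimates to $g_i(t)$, further gives uniform bounds on $|\nabla^m\func{Rm}_{g_i}|$ for every $m\geq 0$ on each fixed ball, as needed for $C^\infty$ compactness.

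Next I would invoke Hamilton's compactness theorem for Ricci flows applied to $(\mathcal{M}_i,g_i(t),x_i)$. From the canonical form $g_i(t)=(1-\lambda_it)\varphi_{i,t}^{\ast}g_i$ we have $|\func{Rm}_{g_i(t)}|_{g_i(t)}\leq C(1-\lambda_it)^{-1}$, and pulling back the derivative bounds yields uniform $C^m$ bounds on curvature on every compact subinterval of $(-\infty,\lambda^{-1})$. Combined with the injectivity radius lower bound at the basepoint, Hamilton's theorem produces a subsequential $C^\infty$ pointed Cheeger--Gromov limit, a complete ancient Ricci flow $(\mathcal{M}^n_\infty,g_\infty(t),x_\infty)$ on $(-\infty,\lambda^{-1})$. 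The limit inherits $\kappa$-noncollapsing below a fixed scale, and the pointwise bound $|\func{Rm}_{g_\infty(t)}|_{g_\infty(t)}\leq C(1-\bar\lambda t)^{-1}$, with $\bar\lambda=\lim\lambda_i\leq\lambda$ (passing to a further subsequence if necessary to ensure $\bar\lambda>0$), yields the Type~I estimate $|\func{Rm}_{g_\infty(t)}|_{g_\infty(t)}\leq C'|t|^{-1}$ as $t\to-\infty$.

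For the splitting, I would apply Theorem~\ref{thm compactness shrinker seq} to the time-zero slices $(\mathcal{M}_i,g_i,f_i,\lambda_i,x_i)$ to obtain an isometry $(\mathcal{M}_\infty,g_\infty(0))\cong(\mathbb{R},ds^2)\times(\mathcal{N}^{n-1},h_0)$, generated by a parallel unit vector field $V=\nabla_{g_\infty(0)}F_\infty$ satisfying $\func{Rc}_{g_\infty(0)}(V,\cdot)=0$. To propagate the splitting to all $t\in(-\infty,\lambda^{-1})$, I would invoke uniqueness of complete Ricci flows with curvature bounded on compact time intervals (B.-L.~Chen and Kotschwar) to identify $g_\infty(t)$ with the product flow $ds^2+h(t)$ sharing the same initial data, where $h(t)$ is the Ricci flow on $\mathcal{N}^{n-1}$ starting from $h_0$. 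Equivalently, the flow of $V$ defines an isometric $\mathbb{R}$-action on $(\mathcal{M}_\infty,g_\infty(0))$; since Ricci flow commutes with isometries, this action persists with parallel unit generator at every time, producing the product structure. The completeness, $\kappa$-noncollapsing, and Type~I bound for $h(t)$ then follow directly from those of $g_\infty(t)$.

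The principal obstacle I anticipate is the rigorous propagation of the splitting across the full time interval: this requires either a uniqueness theorem for noncompact Ricci flows (available in our setting by the curvature bounds on compact subintervals) or a direct argument that the parallel Killing field $V$ persists under the flow, together with verifying that the induced flow $h(t)$ on $\mathcal{N}^{n-1}$ inherits the Type~I bound at $-\infty$ and remains $\kappa$-noncollapsed. The admissibility verification and the appeal to Hamilton's Ricci flow compactness theorem are relatively routine once the uniform bounds on the covariant derivatives of curvature are in hand.
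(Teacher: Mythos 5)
Your proposal reaches the same conclusion but by a genuinely different route in the key propagation step. The paper applies Theorem~\ref{thm compactness shrinker seq} at \emph{every} time slice: for each fixed $t$, it verifies that $\{(\mathcal{M}_i,g_i(t),f_i(t),\tfrac{\lambda_i}{1-\lambda_i t},x_i)\}$ is admissible (using uniform metric and volume equivalence of $g_i(t)$ with $g_i(0)$ to carry over conditions 3 and 4, and Shi's estimates for 5 and 6), obtains a splitting of $g_\infty(t)$ for that $t$, and then argues the $\mathbb{R}$-direction is consistent across $t$ because the splitting functions $F^{(i)}(x,t)$ depend smoothly on $t$. You instead apply the static splitting theorem only at $t=0$ and propagate via Ricci-flow uniqueness. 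That works, but costs you backward uniqueness for complete noncompact Ricci flows (Kotschwar) in addition to forward uniqueness (Chen), since the splitting must be pushed to $t<0$; the paper's route avoids this machinery entirely. The trade-off is that your approach verifies admissibility only once, whereas the paper has to re-verify it at each time slice.

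Two rough edges in your sketch. First, the soliton identities you cite for condition 6 are not enough: the bounds $\nabla_i R = 2R_{ij}\nabla^j f$ and $\nabla_k R_{ij}-\nabla_i R_{kj}=R_{ikjl}\nabla^l f$ introduce a factor of $|\nabla_{g_i} f_i|_{g_i}$, which on $B^{g_i}_\rho(x_i)$ is comparable to $\tfrac12\lambda_i d_{g_i}(x_i,O_i)$ and hence tends to infinity along an admissible sequence; moreover they only control a skew part of $\nabla\func{Rc}$. Your alternative via Shi's local derivative estimates applied to the ancient flow $g_i(t)$ at $t=0$ is the argument that actually closes this. Second, the step ``this action persists with parallel unit generator at every time'' is too quick as stated: uniqueness gives you that the $\mathbb{R}$-action remains isometric, i.e.\ $V$ remains Killing, but a Killing field need not be parallel. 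The clean version of your argument is that if $g_\infty(0)=ds^2+h_0$ on $\mathbb{R}\times\mathcal{N}$, then $ds^2+h(t)$ (with $h(t)$ the Ricci flow of $h_0$) is a complete bounded-curvature solution with the same initial data, so forward and backward uniqueness force $g_\infty(t)=ds^2+h(t)$ wherever both sides are defined, and a continuity/bootstrap argument then shows $h(t)$ extends to all of $(-\infty,\lambda^{-1})$ because $g_\infty(t)$ does. With these two points tightened, your proof is correct; you also rightly note that the dimension-reduction hypothesis $\lambda_i d_{g_i}(x_i,O_i)\to\infty$ must be read into the statement, just as the paper's own proof implicitly does.
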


\begin{proof}

Since $|\func{Rm}_{g_i}|_{g_i} \leq C$, from the discussion preceding
(\ref{eq shrinking GRS RF 1}), we have
\[
|\func{Rm}_{g_i(t)}|_{g_i(t)} \leq
\frac{C}{1-\lambda_i t}  \leq \frac{C}{1-\lambda t}.
\]
By Shi's derivative estimates, on any compact subinterval $I$ of $(-\infty, \lambda^{-1})$
there exist constants $C_{k,n,I}$ (independent of $i$) such
that $|\nabla_{g_i(t)} ^{k}\func{Rm}_{g_i(t)}|_{g_i (t)} \leq C_{n,k, I}$ on $\mathcal{M}_i$ for $k\in \mathbb{
N}$, $t \in I$,  and  all $i$.

It follows from the uniformly $\kappa$-noncollapsed assumption and the uniform curvature bound
assumption that there is a constant $\iota >0$ such that $\func{inj}_{g_i(0)}(x_{i}) \geq \iota$ for all $i$.
 By Hamilton's Cheeger--Gromov compactness theorem for
solutions of Ricci flow and by passing to a subsequence if necessary, $\{(\mathcal{M}_i^n
,g_i(t), x_{i})\}$, $t\in (-\infty ,\lambda^{-1})$, converges to an ancient solution $(
\mathcal{M}_{\infty }^{n},g_{\infty }(t),x_{\infty })$, $t\in (-\infty , \lambda^{-1})$,
of the Ricci flow satisfying $|\func{Rm}_{g_{\infty }(t)}|_{g_{\infty }(t)} \leq \frac{C}{1- \lambda t}$.
This implies that $g_{\infty }(t)$ is Type I at $t =- \infty$

Fix a $t \in (-\infty, \lambda^{-1})$. By (\ref{eq shrinking GRS RF 2}), $g_i(t)$ is a shrinking GRS.
It is easy to check that $\{ (\mathcal{M}_i^n
,g_i(t), f_i(t), $ $\frac{\lambda_i}{1- \lambda_i t}, x_{i}) \} $ is an admissible sequence of pointed complete
 noncompact
shrinking GRS. In particular, condition 3 in the definition of admissibility follows from
\[
C^{-1} d_{g_i(t)}(x_i,O_i) \leq d_{g_i}(x_i,O_i) \leq C d_{g_i(t)}(x_i,O_i)
\]
for some constant $C$ independent of $i$.
This inequality is due to the fact that a uniform curvature bound
implies uniform metric equivalence for the Ricci flow (see, for example, \cite[Lemma 6.10]{ChowLuNi}).
The uniform curvature bound also implies uniform volume equivalence
of unit balls. Hence, condition 4 that $\func{inj}_{g_i(t)}(x_i) \geq \iota_1 >0$ for an admissible
sequence follows from a theorem of Cheeger, Gromov,
and Taylor (\cite{CGT82}).
Now we can apply Theorem \ref{thm compactness shrinker seq} to the sequence  $\{ (\mathcal{M}_i^n
,g_i(t), f_i(t), \frac{\lambda_i}{1- \lambda_i t}, x_{i}) \} $ to conclude that the limit $(
\mathcal{M}_{\infty }^{n},g_{\infty }(t))$ is  isometric to the product of $\mathbb{R}$ with a complete
Riemannian manifold $(\mathcal{N}^{n-1},h(t))$ for each $t$.
The constancy of the splitting of $\mathcal{M}_{\infty }^{n}$
into $\mathbb{R} \times \mathcal{N}^{n-1}$ for different $t$ follows from the smooth dependence on $t$
of the functions
\[
F^{(i)} (x,t) \doteqdot 2 \left (\frac{\lambda_i}{1-\lambda_i t} \right )^{-1/2} \cdot
 ( \sqrt{f_i}(x,t) -  \sqrt{f_i}(x_i,t) ),
\]
whose limits are used to define the splitting.  Now the theorem is proved.
~~~ $\square$
\end{proof}

\begin{remark}
If we combine Deng and Zhu's discussion (\cite[p. 12]{DZ15}) and
Corollary \ref{cor type I sol split} and apply them to a complete noncompact shrinking K\"{a}hler GRS,
then we obtain a splitting of the form $(\mathbb{R} \times S^1) \times W^{n-1}$ or $\mathbb{C} \times W^{n-1}$,
where $W$ is of complex dimension $n-1$.
\end{remark}

\section{Splitting at infinity of shrinking GRS with bounded curvature}

From Corollary \ref{cor type I sol split}, we obtain our main theorem, which has some flavor of the
canonical neighborhood property.

\begin{theorem} \label{thm main de BC}
Let $(\mathcal{M}^{n},g(t),f(t))$, $t\in(-\infty,1)$, be a complete
noncompact $\kappa$-noncollapsed \emph{(}below a fixed scale\emph{)} shrinking GRS in
canonical form with bounded curvature. Then, for any $\varepsilon>0$ there
is a compact set $K_{\varepsilon}\subset\mathcal{M}$ such that for each $
x\in \mathcal{M}-K_{\varepsilon}$ there is an open set $\mathcal{U}\subset
\mathcal{M}$ such that $(\mathcal{U},g(t),x)$ is $\varepsilon$-close for $
t\in(-\varepsilon^{-1},1-\varepsilon)$ in the $C^{\lfloor\varepsilon^{-1}
\rfloor}$-pointed Cheeger--Gromov sense to $(B_{\varepsilon^{-1}}^{h(0)}(y)\times\left(
-\varepsilon ^{-1},\varepsilon^{-1}\right) ,h(t)+ds^{2},(y,0))$, where $y \in
\mathcal{N}^{n-1}$ and where $(\mathcal{N}, h(t))$, $t\in (-\infty,1)$, is a
complete \emph{(}possibly flat\emph{)} $\kappa$-noncollapsed \emph{(}below
some fixed scale\emph{)} Type I ancient solution of Ricci flow
 with $|\func{Rm}_{h(t)}| \leq \frac{C}{1-t}$.
\end{theorem}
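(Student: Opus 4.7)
The plan is to argue by contradiction, reducing the assertion to Corollary \ref{cor type I sol split} applied to a sequence of basepoints tending to infinity. Suppose the theorem fails. Then there exists $\varepsilon_0 > 0$ and a sequence of points $x_i \in \mathcal{M}$ with $d_{g(0)}(x_i, O) \to \infty$ (where $O$ is a minimum point of $f(0)$) such that no open neighborhood $\mathcal{U}$ of $x_i$ satisfies the required $\varepsilon_0$-closeness condition for any choice of limiting factor $(\mathcal{N}, h(t))$ and basepoint $y$. Such a sequence must exist, since otherwise any sufficiently large metric ball centered at $O$ could serve as $K_{\varepsilon_0}$.

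Form the sequence of pointed shrinking GRS $(\mathcal{M}, g(0), f(0), 1, x_i)$, in which only the basepoint varies. By hypothesis, the curvature is globally bounded and the solitons are uniformly $\kappa$-noncollapsed below a fixed scale, while $\lambda_i d_{g_i}(x_i, O_i) = d_{g(0)}(x_i, O) \to \infty$ by construction. Thus Corollary \ref{cor type I sol split} applies, so the associated Ricci flow solutions $(\mathcal{M}, g(t), x_i)$, $t \in (-\infty, 1)$, subconverge in the $C^\infty$ pointed Cheeger--Gromov sense to an ancient solution $(\mathcal{M}_\infty, g_\infty(t), x_\infty)$ isometric to a product $(\mathcal{N}^{n-1} \times \mathbb{R}, h(t) + ds^2)$, where $(\mathcal{N}, h(t))$ is a complete (possibly flat) $\kappa$-noncollapsed Type I ancient solution with $|\func{Rm}_{h(t)}|_{h(t)} \leq C/(1-t)$. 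Write $x_\infty = (y, 0)$ under the product decomposition.

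The final step is to translate this subconvergence into the pointed neighborhood statement of the theorem. Unwinding the definition of $C^\infty$ pointed Cheeger--Gromov convergence for Ricci flows, for every $\rho > 0$ and compact subinterval $I \subset (-\infty, 1)$ there exist, for all $i$ sufficiently large, relatively compact open sets $U_i \subset \mathcal{N} \times \mathbb{R}$ containing $B_\rho^{h(0)}(y) \times (-\rho, \rho)$ together with embeddings $\psi_i \colon U_i \to \mathcal{M}$ satisfying $\psi_i(y, 0) = x_i$ and $\psi_i^* g(t) \to h(t) + ds^2$ in $C^k$ on $U_i \times I$ for every $k$. Selecting $\rho = \varepsilon_0^{-1}$, $I = [-\varepsilon_0^{-1}, 1 - \varepsilon_0]$, $k = \lfloor \varepsilon_0^{-1} \rfloor$, and closeness less than $\varepsilon_0$, the image $\mathcal{U} = \psi_i(B_{\varepsilon_0^{-1}}^{h(0)}(y) \times (-\varepsilon_0^{-1}, \varepsilon_0^{-1}))$ provides an $\varepsilon_0$-close neighborhood of $x_i$ of precisely the forbidden type, producing the desired contradiction.

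The bulk of the analytic work is already contained in Corollary \ref{cor type I sol split}, so the residual task is largely bookkeeping. The one point of care is that the splitting factor $(\mathcal{N}, h(t))$ and basepoint $y$ may depend on the particular extracted subsequence and hence \emph{a priori} on $x$, which is exactly what the conclusion of the theorem permits. The main technical obstacle, such as it is, is verifying that the fixed-size product $B_{\varepsilon_0^{-1}}^{h(0)}(y) \times (-\varepsilon_0^{-1}, \varepsilon_0^{-1})$ is contained in some $U_i$ for all $i$ large; this follows from the noncompactness of the $\mathbb{R}$ factor, which ensures that every prescribed product of bounded radius is relatively compact in $\mathcal{N} \times \mathbb{R}$ and therefore eventually covered by the exhausting sets $U_i$ furnished by the Cheeger--Gromov setup.
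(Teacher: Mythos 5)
Your proposal is correct and follows essentially the same route as the paper's own (very terse) proof: argue by contradiction, take a bad sequence $x_i\to\infty$, apply Corollary \ref{cor type I sol split} to the constant soliton $(\mathcal{M},g,f,1)$ with varying basepoints $x_i$, and read off a contradiction from the Cheeger--Gromov convergence for $i$ large. The extra bookkeeping you supply (unwinding the pointed convergence to produce the neighborhood $\mathcal{U}$, and the remark that $(\mathcal{N},h(t))$ and $y$ are allowed to depend on the subsequence and hence on $x$) is exactly the detail the paper compresses into the phrase ``choosing $i$ large enough leads to a contradiction.''
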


\begin{proof}
Suppose that the theorem is false. Then there exists $\varepsilon >0$ and a
sequence of points $x_i\rightarrow\infty$ without the stated property. On
the other hand,  by  Corollary \ref{cor type I sol split} $\{(\mathcal{M},g(t),x_{i})\}$, $t\in(-\infty,1)$,
subconverges to the product of $\mathbb{R}$ with a complete
 $\kappa$-noncollapsed (below some fixed scale) Type I ancient
solution $(\mathcal{N}^{n-1},h(t))$. Choosing $i$ large enough leads to a
contradiction. ~~~ $\square$
\end{proof}

\begin{remark}
As explained in the introduction, in general the best result in the direction of
Theorem \ref{thm main de BC} one can hope for is that if curvature tensor
$\func{Rm}$ of  a complete noncompact shrinking GRS does not limit to
$0$ at infinity, then $(\mathcal{M}^n,g)$
splits as the product of $\mathbb{R}$
and a Riemannian manifold $(\mathcal{N}^{n-1},h)$.
\end{remark}

Now we give some conclusions related Theorem \ref{thm main de BC}
assuming that the solution is $\kappa$-noncollapsed below all scales.
First, in dimension $n=4$,  the sectional curvature of $h(t)$ is nonnegative
by Chen \cite{ChenStrongUniqueness}. If $\mathcal{N}^3$ is compact, then by Ni
\cite{NiClosedTypeI}, $(\mathcal{N},h(t))$ is a spherical space form (note that a
compact quotient of $\mathbb{R}^{3}$ and $\mathcal{S}^{2}\times\mathbb{R}$ are
not $\kappa$-noncollapsed). If $\mathcal{N}^3$ is noncompact, then $h(t)$ is
either $\mathbb{R}^{3}$, $\mathcal{S}^{2}\times\mathbb{R}$, its $\mathbb{Z}
_{2}$-quotient, or has positive sectional curvature.

For the general case $n\geq4$, we have outside a compact set $K$
that $\nabla f\neq0$ and $\nu=\frac{\nabla f}{|\nabla f|}$ is well defined.
Let $\Sigma _{c}=\{f=c\}$, which is a $C^{\infty}$ hypersurface, and let $
g^{T}=g-\nu^{\ast }\otimes\nu^{\ast}=g|_{\Sigma_{c}}$.
Let $F^{(i)}(x) \doteqdot 2( \sqrt{f}(x)-\sqrt{f}(x_i))$.
By the Gauss equations,
\begin{align*}
f^{1/2}\nabla^{2}F^{(i)} & = \nabla^{2}f-\frac{1}{2}\frac{\nabla
f\otimes\nabla f}{f} \\
& = -\limfunc{Rc}{}_{g^{T}}+\frac{1}{2}g^{T}-\func{Rc}_{g}(\nu,\nu)\nu^{
\ast}\otimes\nu^{\ast}-\func{Rm}_{g}(\nu,\cdot,\cdot,\nu) \\
& \quad\; +H\func{II}-\func{II}^{2}+\frac{R}{2f}\nu^{\ast}\otimes\nu^{\ast},
\end{align*}
where $\func{II}$ and $H$ are the second fundamental form and mean curvature
of $\Sigma_{c}$. By the theorem above, on $\mathcal{N}$ the limit $\lim_{i\rightarrow
\infty}(f^{1/2}\nabla^{2}F^{(i)})=\frac{1}{2}h -\func{Rc}_{h}$ exists using
Cheeger--Gromov convergence. Let $p\in\mathcal{M}$. We have $\frac{1}{2}
-\nabla^{2}f(\nu,\nu)=\frac {1}{2\left\vert \nabla f\right\vert }\nabla
R\cdot\nu=o(d(\cdot,p)^{-1})$ and $\nabla^{2}f(X,\nu)=-\frac{1}{2\left\vert
\nabla f\right\vert }\nabla R\cdot X=O(d(\cdot,p)^{-1})$ for unit $X\in
T\Sigma_{c}$.

\vskip .1cm
\textbf{Example}.  Let $(\mathcal{N}^{n-1},\bar{g},\bar{f})$ be a shrinking
GRS. Define the shrinking GRS $(\mathcal{M}^{n},g,f)$ by $\mathcal{M}=
\mathcal{N}\times\mathbb{R}$, $g=\bar{g}+ds\otimes ds$, and $f(y,s)=\bar {f}
(y)+\frac{s^{2}}{4}$. Then $f^{1/2}\nabla^{2}F^{(i)}=\frac {\bar{g}}{2}-
\func{Rc}_{\bar{g}}+\frac{R}{2f}\nu^{\ast}\otimes \nu^{\ast}=\bar{\nabla}^{2}
\bar{f}+O(d(\cdot,p)^{-1})$.

\vskip .1cm
There are implications of the works of Naber \cite{Naber} and Cao and Zhang
\cite{CaoX-ZhangQ} on Type I ancient solutions. In particular, Cao and Zhang
proved that, for any $\kappa$-noncollapsed complete Type I ancient solution
with nonnegative curvature operator, Naber's complete shrinking GRS backward
limit is necessarily nonflat. In view of Lemma 8.27 in \cite{ChowLuNi}, we
may apply this to those shrinking GRS which are singularity models with
nonnegative curvature operator.

\section{Standard point picking}
The results of this section are known consequences of standard techniques,
going back to Hamilton. For convenience, we include the statements in the form
we use.
Let $(\mathcal{M}^n,g)$ be a complete noncompact Riemannian manifold satisfying
\begin{equation}
 \func{ACR}\doteqdot\lim_{\rho \rightarrow\infty}\sup_{x\in
B_{\rho}(O)} d (O,x)^{2}|\func{Rm}(x)|=\infty \quad \text{for some } O
 \in \mathcal{M}. \label{eq ACR infinity}
\end{equation}
We now describe the point picking argument, which enables us to
construct sequences $x_{i}\rightarrow\infty$ such that $g_{i} \doteqdot |\func{Rm}_{g}(x_{i})|
 \cdot g$ satisfies (\ref{localcurv}) in the admissible assumption.
Let $\rho _{i}\rightarrow\infty$ be a sequence. Choose $x_{i}$ with $r_{i}\doteqdot
d_g(O,x_{i})$ so that
\begin{equation}
\frac{r_{i}\left( \rho_{i}-r_{i}\right) \sqrt{|\func{Rm}_g(x_{i})|}}{
\sup_{x\in B^g_{\rho_{i}}(O)}d_g(O, x)\left( \rho_{i}-d_g(O,x)\right) \sqrt {|\func{Rm}_g
(x)|}}\doteqdot1-\delta_{i}\rightarrow1. \label{eq x i choice}
\end{equation}

\textbf{Claim}. Assume (\ref{eq ACR infinity}). Then
\begin{equation*}
\alpha_{i}\doteqdot r_{i}\sqrt{|\func{Rm}_g(x_{i})|}\rightarrow \infty\quad\;
\text{and}\quad\;\omega_{i}\doteqdot(\rho_{i}-r_{i})\sqrt{|\func{Rm}_g(x_{i})|}
\rightarrow\infty.
\end{equation*}
\emph{Proof of the claim}: We compute that
\begin{align*}
\frac{1}{\alpha_{i}^{-1}+\omega_{i}^{-1}} & =\frac{\alpha_{i}\omega_{i}}{
\alpha_{i}+\omega_{i}} \\
& =\frac{r_{i}(\rho_{i}-r_{i})\sqrt{|\func{Rm}_g(x_{i})|}}{\rho_{i}} \\
& =\left( 1-\delta_{i}\right) \cdot \frac{\sup_{x\in B^g_{\rho_{i}}(O)}d_g(O, x)\left(
\rho_{i}- d_g(O, x)\right) \sqrt{|\func{Rm}_g(x)|}}{\rho_{i}} \\
& \geq\frac{1-\delta_{i}}{2} \cdot \sup_{x\in B^g_{\rho_{i}/2}(O)}d_g(O,x)\sqrt {|\func{Rm
}_g(x)|} \\
& \geq\frac{1}{3}\sup_{x\in B^g_{\rho_{i}/2}(O)}d_g(O, x)\sqrt{|\func{Rm}_g(x)|} \\
& \rightarrow\infty \quad \text{as } i \rightarrow\infty.
\end{align*}
 Hence $\alpha_{i} \rightarrow \infty$ and $\omega
_{i} \rightarrow \infty$.
~~~ $\square$\smallskip

Then, for $x \in B_{\rho_{i}}^{g}(O)=B_{\rho_{i}\sqrt{|\func{Rm}_g(x_{i})|}}^{g_{i}}(O)$,
\begin{align*}
& \sqrt{|\func{Rm}_{g_{i}}(x)|} \leq\frac{1}{1-\delta_{i}} \cdot \frac{r_{i}\left(
\rho_{i}-r_{i}\right) }{d_g(O, x)\left( \rho_{i}-d_g(O, x)\right) } \\
=& \frac{1}{1-\delta_{i}} \cdot \frac{\alpha_{i}\omega_{i}}{d_g(O,x)\sqrt {|\func{Rm}_g
(x_{i})|}\left( \rho_{i}-d_g(O, x)\right) \sqrt {|\func{Rm}_g(x_{i})|}} \\
 =& \frac{1}{1-\delta_{i}} \cdot \frac{\alpha_{i}}{\alpha_{i}+\left(
d_g(O,x)-r_{i}\right) \sqrt{|\func{Rm}_g(x_{i})|}} \cdot \frac{\omega_{i}}{\omega
_{i}-\left( d_g(O,x)-r_{i}\right) \sqrt{|\func{Rm}_g(x_{i})|}}.
\end{align*}

Let $\rho\in\left( 0,\infty\right) $. Suppose that $x\in
B_{\rho}^{g_{i}}(x_{i})$, which is equivalent to $x\in B_{\rho/\sqrt{|\func{Rm
}_g(x_{i})|}}^{g}(x_{i})$. This implies that
\begin{equation*}
-\rho/\sqrt{|\func{Rm}_g(x_{i})|}\leq d_g\left( O, x\right) -r_{i}\leq \rho/\sqrt{|
\func{Rm}_g (x_{i})|},
\end{equation*}
i.e.,
\begin{equation*}
-\rho\leq\left( d_g(O, x)-r_{i}\right) \cdot \sqrt{|\func{Rm}_g(x_{i})|}\leq \rho.
\end{equation*}
Thus:

\begin{lemma}. \label{lem only one of}
Assume \emph{(\ref{eq ACR infinity})} and choose $x_i$ as in \emph{(\ref{eq x i choice})}.
If $x\in B_{\rho }^{g_{i}}(x_{i})$, then
\begin{equation*}
\sqrt{|\func{Rm}_{g_{i}}(x)|}\leq \frac{1}{1-\delta _{i}} \cdot \frac{\alpha _{i}}{
\alpha _{i}-\rho } \cdot \frac{\omega _{i}}{\omega _{i}-\rho },
\end{equation*}
where the \textsc{rhs} tends to $1$ as $i\rightarrow \infty $.
\end{lemma}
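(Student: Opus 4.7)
The plan is essentially to bookkeep the estimate developed in the paragraphs preceding the lemma. The key inequality, obtained from the point-picking choice (\ref{eq x i choice}) together with the rescaling $g_i = |\func{Rm}_g(x_i)|\, g$, reads
\[
\sqrt{|\func{Rm}_{g_i}(x)|}\leq \frac{1}{1-\delta_i}\cdot \frac{\alpha_i}{\alpha_i + s_x}\cdot \frac{\omega_i}{\omega_i - s_x},
\]
valid for every $x\in B_{\rho_i}^{g}(O)$, where $s_x \doteqdot (d_g(O,x) - r_i)\sqrt{|\func{Rm}_g(x_i)|}$. All of the real work sits in this inequality and in the Claim; the lemma is essentially a uniform estimate extracted from it.

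The first step is to translate the hypothesis $x\in B_\rho^{g_i}(x_i)$ via the conformal rescaling: this ball coincides with $B_{\rho/\sqrt{|\func{Rm}_g(x_i)|}}^{g}(x_i)$, so the triangle inequality applied to the triple $O,x_i,x$ forces $-\rho \leq s_x \leq \rho$. Next I would exploit the monotonicity of the two fractions in $s_x$: since $s_x \geq -\rho$ we have $\alpha_i + s_x \geq \alpha_i - \rho$, hence $\alpha_i/(\alpha_i + s_x) \leq \alpha_i/(\alpha_i - \rho)$; since $s_x \leq \rho$ we have $\omega_i - s_x \geq \omega_i - \rho$, hence $\omega_i/(\omega_i - s_x) \leq \omega_i/(\omega_i - \rho)$. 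Multiplying yields the stated bound.

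One minor technical check is that the displayed inequality actually applies, i.e. that $B_\rho^{g_i}(x_i) \subset B_{\rho_i}^{g}(O)$ for large $i$. This reduces to $r_i + \rho/\sqrt{|\func{Rm}_g(x_i)|} \leq \rho_i$, equivalently $\omega_i \geq \rho$, which is guaranteed for large $i$ by the Claim. The asymptotic statement is then immediate: $\delta_i \to 0$ by the definition in (\ref{eq x i choice}), while $\alpha_i,\omega_i \to \infty$ is precisely the content of the Claim, so each of the three factors on the right-hand side tends to $1$. I would not expect any genuine obstacle here — the lemma is a direct corollary of material already on the page, and I would present it in a few lines.
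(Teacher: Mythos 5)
Your proposal is correct and follows essentially the same route as the paper: the lemma is the endpoint of the chain of estimates in the paragraphs preceding it, and you replay that chain by naming the implicit quantity $s_x=(d_g(O,x)-r_i)\sqrt{|\func{Rm}_g(x_i)|}$, invoking the triangle-inequality bound $|s_x|\le\rho$, and then using monotonicity of the two rational factors. Your explicit verification that $B_\rho^{g_i}(x_i)\subset B_{\rho_i}^{g}(O)$ for large $i$ (equivalently $\omega_i\ge\rho$) is a slightly more careful statement of a step the paper leaves implicit, relying on the Claim just as you do.
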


\begin{corollary}
Suppose that $(\mathcal{M}^{n},g,f,1)$ is a complete noncompact $\kappa$-noncollapsed
 \emph{(}on all scales\emph{)}
shrinking GRS satisfying \emph{(\ref{eq ACR infinity})}
and $\left\vert \func{Rm}\right\vert (x) =o(d(O,x)^2)$, where $O$ is a minimum point of $f$.
Let $x_{i}\rightarrow \infty $ be chosen as in \emph{(\ref{eq x i choice})}.
Assume that $K_i \doteqdot \left\vert \func{Rm}\right\vert (x_{i})\geq c>0$
  and
  \begin{equation}
  \sup_i \left ( K_i^{-3/2} \cdot \sup_{x \in B_\rho(x_i)} |\nabla \func{Rc}| \right ) < \infty
  \quad \text{for all } \rho > 0. \label{eq nabla Rc assump}
  \end{equation}
  Define $g_i \doteqdot K_i g$. Let $\alpha$ be any constant in $(0,1)$.

 Then $(\mathcal{M}^n,g_{i},x_{i})$ converges in the $C^{2,\alpha }$ pointed
Cheeger--Gromov sense to a complete $C^{2,\alpha }$ Riemannian manifold $(\mathcal{M}
_{\infty }^{n},g_{\infty },x_{\infty })$, and $(\mathcal{M}_{\infty
},g_{\infty })$ is isometric to the product of $\mathbb{R}$ with a complete
nonflat Riemannian manifold $(\mathcal{N}^{n-1},h)$.
\end{corollary}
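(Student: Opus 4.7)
The plan is to reduce the statement to Theorem \ref{thm compactness shrinker seq} by exhibiting the rescaled sequence $(\mathcal{M}^n, g_i, f_i, \lambda_i, x_i)$ with $f_i \doteqdot f$ and $\lambda_i \doteqdot K_i^{-1}$ as an admissible sequence of pointed shrinking GRS, and then reading off nonflatness of the cross-section from the normalization $|\func{Rm}_{g_i}|(x_i) = 1$.

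Since $\func{Rc}$ and $\nabla^2 f$ are invariant under constant rescaling of the metric, the GRS identity $\func{Rc}_g + \nabla_g^2 f = \tfrac{1}{2} g$ becomes $\func{Rc}_{g_i} + \nabla_{g_i}^2 f_i = \tfrac{\lambda_i}{2} g_i$, and likewise $R_g + |\nabla_g f|_g^2 = f$ rescales to $R_{g_i} + |\nabla_{g_i} f_i|_{g_i}^2 = \lambda_i f_i$, so admissibility conditions 1 and 2 hold, with $\lambda_i = K_i^{-1} \le c^{-1}$ bounded since $K_i \ge c$. Because $f_i = f$, the minimum point $O_i$ is the fixed $O$, and admissibility condition 3 reads $\lambda_i d_{g_i}(x_i, O) = K_i^{-1/2} r_i = r_i / \sqrt{|\func{Rm}_g|(x_i)} \to \infty$, which is exactly where the quadratic-decay hypothesis $|\func{Rm}|(x) = o(d(O,x)^2)$ is used. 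Condition 5 is a direct transcription of Lemma \ref{lem only one of}: the point-picking choice (\ref{eq x i choice}) gives $|\func{Rm}_{g_i}| \le C_1(\rho)$ on $B^{g_i}_\rho(x_i)$ for all large $i$. Condition 6 follows from the scaling identity $|\nabla_{g_i} \func{Rc}_{g_i}|_{g_i} = K_i^{-3/2} |\nabla_g \func{Rc}_g|_g$, the inclusion $B^{g_i}_\rho(x_i) \subset B^g_{\rho/\sqrt{c}}(x_i)$, and hypothesis (\ref{eq nabla Rc assump}). Finally, for condition 4, the $\kappa$-noncollapsed assumption on all scales is scale invariant and, combined with the curvature bound just obtained, yields via Cheeger--Gromov--Taylor a uniform $\iota > 0$ with $\func{inj}_{g_i}(x_i) \ge \iota$.

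With admissibility verified, Theorem \ref{thm compactness shrinker seq} produces a $C^{2,\alpha}$ pointed Cheeger--Gromov subsequential limit $(\mathcal{M}^n_\infty, g_\infty, x_\infty)$ isometric to $\mathbb{R} \times (\mathcal{N}^{n-1}, h)$ with $h$ complete. To rule out flatness of $h$, observe that by construction $|\func{Rm}_{g_i}|(x_i) = K_i^{-1} \cdot K_i = 1$, and $C^{2,\alpha}$ convergence of metrics entails $C^\alpha$, hence pointwise, convergence of Riemann tensors, so $|\func{Rm}_{g_\infty}|(x_\infty) = 1$. Writing $x_\infty = (y, s_0)$ with $y \in \mathcal{N}$, the product structure forces $|\func{Rm}_h|(y) = 1$, and so $h$ is not flat.

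I expect the main obstacle to be condition 3 of admissibility; this is the only place where the quadratic-decay hypothesis $|\func{Rm}|(x) = o(d(O,x)^2)$ is essential, and without it the rescaled sequence fails to reach infinity in the $g_i$-rescaled sense, so the splitting-at-infinity mechanism of Theorem \ref{thm compactness shrinker seq} cannot be invoked. All remaining admissibility checks are bookkeeping combining scaling behavior of curvature quantities with the point-picking construction from the preceding section.
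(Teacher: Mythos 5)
Your proof takes essentially the same route as the paper's: verify that $\{(\mathcal{M}, g_i, f, K_i^{-1}, x_i)\}$ is an admissible sequence (condition 5 from Lemma \ref{lem only one of}, condition 6 from the scaling of $\nabla\func{Rc}$ and hypothesis (\ref{eq nabla Rc assump}), condition 3 from the quadratic-decay hypothesis, condition 4 from $\kappa$-noncollapsing together with the curvature bound and Cheeger--Gromov--Taylor), and then apply Theorem \ref{thm compactness shrinker seq}. Your closing paragraph---deducing $|\func{Rm}_{g_\infty}|(x_\infty)=1$ from the point-picking normalization $|\func{Rm}_{g_i}|_{g_i}(x_i)=1$ and the $C^{2,\alpha}$ convergence, and hence nonflatness of $(\mathcal{N},h)$---is a welcome addition: the paper's written proof stops after verifying admissibility and does not explicitly justify the word \emph{nonflat} in the conclusion, which is a real part of the statement since Theorem \ref{thm compactness shrinker seq} by itself allows $h$ to be flat.
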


\begin{proof}
The corollary follows from the claim  that $\{ (\mathcal{M},g_{i}, f, K_i^{-1}, x_{i})\}$ is an admissible
sequence of complete noncompact shrinking GRS and from Theorem \ref{thm compactness shrinker seq}.

 To see the claim, note that condition 5 follows from Lemma \ref{lem only one of}.
Condition 4 follows from condition 5 and the   $\kappa$-noncollapsing assumption.
Condition 6 follows from assumption (\ref{eq nabla Rc assump}). Conditions 1 and 2 follow from
the definition and $\lambda_i = K_i^{-1} \leq c^{-1}$.

Finally, to see condition 3, we compute that
\[
\lambda_i d_{g_i}(x_i,O) = K_i^{-1} \cdot K_i^{1/2}  d_{g}(x_i,O)
=\frac{ d_{g}(x_i,O)}{K_i^{1/2}} = \frac{ d_{g}(x_i,O)}{o(d_g(x_i,O)} \rightarrow \infty,
\]
where we have used  $\left\vert \func{Rm}\right\vert (x) =o(d(O,x)^2)$ in the last equality.
~~~   $\square$
\end{proof}



\begin{thebibliography}{99}


\bibitem{CaoZhou} Cao, Huai-Dong; Zhou, De-Tang. \emph{On complete gradient
shrinking Ricci solitons.} Journal of Differential Geometry \textbf{85}
(2010), 175--186.

\bibitem{CaoX-ZhangQ} Cao, Xiaodong; Zhang, Qi S. \emph{The conjugate heat
equation and ancient solutions of the Ricci flow.} Advances in Mathematics
\textbf{228} (2011), 2891--2919.

\bibitem{CheegerGromollRicSplit} Cheeger, Jeff; Gromoll, Detlef. \emph{The
splitting theorem for manifolds of nonnegative Ricci curvature.} Journal of
Differential Geometry \textbf{6} (1971/72), 119--128.

\bibitem{CGT82} Cheeger, Jeff; Gromov, Mikhail; Taylor, Michael. \emph{Finite propagation speed, kernel
estimates for functions of the Laplace operator, and the geometry of complete Rie-
mannian manifolds}. Journal of Differential Geometry \textbf{17} (1982) 15--53.

\bibitem{ChenStrongUniqueness} Chen, Bing-Long. \emph{Strong uniqueness of
the Ricci flow.} Journal of Differential Geometry \textbf{82} (2009),
363--382.

\bibitem{ChowLuNi} Chow, Bennett; Lu, Peng; Ni, Lei. \emph{Hamilton's Ricci
flow.} Lectures in Contemporary Mathematics, \textbf{3}, Science Press and
Graduate Studies in Mathematics, \textbf{77}, American Mathematical Society
(co-publication), 2006.

\bibitem{DZ15} Deng, yuxing; Zhu, Xiaohua. \emph{Asymptotic behavior of positively
curved steady Ricci solitons}.  ArXiv:1507.04802

\bibitem{DeKa81} DeTurck, Dennis; Kazdan, Jerry. \emph{Some regularity theorems in Riemannian geometry.}
Annales Sci. de l'ENS, 4 serie, tome 14 (1981), 249--260.

\bibitem{FangEtc} Fang, Fuquan; Li, Xiang-Dong; Zhang, Zhenlei. \emph{Two
generalizations of Cheeger-Gromoll splitting theorem via Bakry-Emery Ricci
curvature} Annales de l'institut Fourier \textbf{59} (2009), 563--573.

\bibitem{Greene93} Greene, Robert E. \emph{Some concepts and methods in Riemannian geometry}.
 Proc. Sympos. Pure Math., \textbf{54}, Part 3, Amer. Math. Soc., Providence, RI, 1993. 1--22.

\bibitem{Hamilton1995} Hamilton, Richard S. \emph{The formation of
singularities in the Ricci flow.\ }Surveys in differential geometry, Vol.\
II (Cambridge, MA, 1993), 7--136, International Press, Cambridge, MA, 1995.

\bibitem{Hamilton1995B} Hamilton, Richard S. \emph{A compactness property for
solutions of the Ricci flow}. Amer. J. Math. \textbf{117} (1995), 545--572.

\bibitem{HaslhoferMuller} Haslhofer, Robert; M\"{u}ller, Reto. \emph{A
compactness theorem for complete Ricci shrinkers.} Geom. Funct. Anal.
\textbf{21} (2011), 1091--1116.

\bibitem{JoKa82}Jost, J\"{u}rgen; Karcher, Hermann. \emph{Geometrische
Methoden zur Gewinnung von a-priori-Schranken f\"{u}r harmonische
Abbildungen.} (German) [\emph{Geometric methods for obtaining a priori bounds
for harmonic mappings}] Manuscripta Math. \textbf{40} (1982), no. 1, 27--77.

\bibitem{Lichnerowicz} Lichnerowicz, Andr\'{e}. \emph{Varietes riemanniennes
a tensor C non negatif.} Comptes Rendus de l'Acad\'{e}mie des Sciences Paris
Series A \textbf{271} (1970), 650--653.

\bibitem{MunteanuWang1} Munteanu, Ovidiu; Wang, Jiaping. \emph{Smooth metric
measure spaces with non-negative curvature.} Communications in Analysis and
Geometry \textbf{19} (2011), 451--486.

\bibitem{MunteanuWang3} Munteanu, Ovidiu; Wang, Jiaping. \emph{Geometry of
manifolds with densities.} Advances in Mathematics \textbf{259} (2014),
269--305.

\bibitem{Naber} Naber, Aaron. \emph{Noncompact shrinking 4-solitons with
nonnegative curvature.} Journal fur die Reine und Angewandte Mathematik
\textbf{645} (2010), 125--153.

\bibitem{NiClosedTypeI} Ni, Lei. \emph{Closed type-I ancient solutions to
Ricci flow,} Recent Advances in Geometric Analysis, ALM Vol \textbf{11}
(2009), 147--150.
\end{thebibliography}

\end{document}